\documentclass[11pt]{amsart}

\usepackage{xcolor}
\usepackage[T1]{fontenc}
\usepackage{amsmath,amssymb,mathtools}
\usepackage{hyperref}
\usepackage[capitalize]{cleveref}

\newtheorem{theorem}{Theorem}[section]
\newtheorem{proposition}[theorem]{Proposition}
\newtheorem{lemma}[theorem]{Lemma}
\newtheorem{corollary}[theorem]{Corollary}
\theoremstyle{definition}

\newtheorem{remark}[theorem]{Remark}

\newcommand{\R}{\mathbb R}
\newcommand{\one}{\mathbf 1}
\newcommand{\ip}[2]{\left\langle #1,#2\right\rangle}

\title[{Weinstock Inequality on Regular Trees}]{Weinstock Inequality on Regular Trees}

\author{Lili Wang}
\address{Lili Wang: School of Mathematics and Statistics, Key Laboratory of Analytical Mathematics and Applications (Ministry of Education), Fujian Key Laboratory of Analytical Mathematics and Applications (FJKLAMA), Fujian Normal University, Fuzhou 350117, China}
\email{liliwang@fjnu.edu.cn}

\author{Tao Wang}
\address{Tao Wang: Beijing International Center for Mathematical Research, Peking University, Beijing 100871, China}
\email{taowang25@pku.edu.cn}

\date{\today}
\subjclass[2020]{05C05, 05C50, 47A75}
\keywords{Steklov eigenvalue, regular tree, Dirichlet-to-Neumann operator, Weinstock inequality}

\begin{document}

\begin{abstract}
Let $T_n$ be the infinite $n$-regular tree, $n\ge3$. We prove that every finite connected vertex set $\Omega\subset T_n$ satisfies the sharp inequality
\[
\sigma_1(\Omega)\le \frac{n}{(n-1)|\Omega|+1}.
\]
Equality holds if and only if $\Omega$ is a ball. Since
\[
|\delta\Omega|=(n-2)|\Omega|+2,
\]
the result is equivalently a sharp upper bound at fixed external boundary cardinality, and hence a discrete Weinstock inequality on $T_n$.
\end{abstract}
 
\maketitle

\section{Introduction}
For a finite graph with boundary, harmonic extension determines a finite-dimensional Dirichlet-to-Neumann operator: boundary values are sent to the outward normal derivatives of their harmonic extensions. This operator is the discrete analogue of the classical Dirichlet-to-Neumann map on manifolds with boundary; see \cite{GP2017,CGGS2024}. The discrete Steklov problem was introduced and studied in \cite{HHW2017,HM2020}. 

The starting point of the present work is Weinstock's theorem \cite{Weinstock1954}: among simply connected planar domains of prescribed perimeter, the disk uniquely maximizes the first nonzero Steklov eigenvalue. Brock's reciprocal-sum inequality gives the complementary fixed-volume statement for bounded Lipschitz domains in Euclidean space, with the ball as extremizer \cite{Brock2001}. Let $T_n$ be the infinite $n$-regular tree, $n\ge3$. For a finite connected set $\Omega\subset T_n$, let $\delta\Omega$ denote its vertex boundary. Throughout, for a finite set $X$, we write $|X|$ for its cardinality. Boundary and volume are tied by the exact identity (see Lemma~\ref{lem:boundary-volume}) 
\[
|\delta\Omega|=(n-2)|\Omega|+2.
\]
Thus fixing the boundary cardinality is equivalent to fixing $|\Omega|$. For $r\ge0$, we denote by $B_r$ the ball of radius $r$ in $T_n$; its centre is immaterial because $T_n$ is homogeneous.

Related estimates are known in other ambient settings. Han and Hua obtained an upper bound for the first nonzero Steklov eigenvalue of finite domains in $\mathbb{Z}^d$
\cite{HanHua2023}. Perrin proved analogous first eigenvalue estimates for finite graphs with boundary contained in Cayley graphs of polynomial growth \cite{Perrin2021}. 
These results do not apply to regular trees, since their volume growth is exponential.

For finite trees with leaf boundary, He and Hua obtained an order-sharp estimate in terms of the boundary cardinality and the maximum degree, together with a sharp diameter bound \cite{HHUpper2022}. They later introduced Steklov flows and established monotonicity and rigidity for the first nonzero eigenvalue \cite{HHFlows2022}. Yu and Yu extended the monotonicity theorem to higher eigenvalues on weighted graphs and obtained further estimates for trees \cite{YY2024}.

Lin and Zhao obtained bounds for planar and block graphs, and identified extremal candidates for trees with prescribed leaf number and maximum degree \cite{LZ2025b}; they also determined the extremal values for trees with prescribed leaf and vertex counts \cite{LZ2025a}. More recently, Ai-Ji-Lian-Yang obtained the optimal bound in terms of the maximum degree and the boundary cardinality, and explicitly diagonalized the Steklov operator on level-regular trees \cite{AJLY2026}. The spectrum of a regular ball is a special case of their formula.

None of these works, however, addresses the extremal problem considered here. Our focus is the far more rigid class of trees induced by connected sets $\Omega\subset T_n$: in
$\overline\Omega=\Omega\cup\delta\Omega$, every vertex of $\Omega$ has degree $n$, and the leaves are exactly $\delta\Omega$.

The rearrangement method for the  Dirichlet ground state on regular trees \cite{Pruss1998} is not directly applicable to the present Steklov problem. We instead use the inverse Dirichlet-to-Neumann form on mean-zero boundary fluxes; on a tree this form is an exact sum of squared cut fluxes. A boundary centroid, together with a lower bound for the branchwise flux energy and a finite-dimensional quadratic inequality, yields the sharp volume bound. The equality conditions force all branches at the centroid to be complete $q$-ary branches of equal depth.

Our main result is the following.
\begin{theorem}\label{thm:main}
Let $n \ge 3$ and $\Omega$ be a finite connected subset of $T_n$. Then
\[
 \sigma_1(\Omega) \leq \frac{n}{(n-1)|\Omega| + 1}.
\]
Equality holds if and only if $\Omega$ is a ball.
\end{theorem}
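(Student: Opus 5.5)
The plan is to work with the inverse of the Dirichlet-to-Neumann operator $\Lambda$ on mean-zero boundary data, as the introduction suggests. Let $B=\delta\Omega$, $N=|B|=(n-2)|\Omega|+2$ (Lemma~\ref{lem:boundary-volume}), and $q=n-1$. Since $\sigma_1$ is the smallest nonzero eigenvalue of $\Lambda$, its reciprocal is the largest eigenvalue of $\Lambda^{-1}$ restricted to $\{g\in\R^B:\sum g=0\}$. Thus
\[
\frac1{\sigma_1}=\max_{g\perp\one,\ g\ne0}\frac{\ip{\Lambda^{-1}g}{g}}{\ip gg},
\]
and it suffices to exhibit a single flux $g\perp\one$ with $\ip{\Lambda^{-1}g}{g}\ge \frac{(n-1)|\Omega|+1}{n}\ip gg$.

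First I compute $\ip{\Lambda^{-1}g}{g}$ on the tree $\overline\Omega$. Given mean-zero $g$, the function $u$ with $\Delta u=0$ on $\Omega$ and normal derivative $g$ on $B$ is unique up to constants. The flux through each edge $e$ is determined: it equals $F_e(g)=\sum_{b\in B_e}g(b)$, where $B_e$ is the set of leaves on one side of $e$. Then $\ip{\Lambda^{-1}g}{g}=\ip ug=\sum_e |\nabla u(e)|^2=\sum_e F_e(g)^2$, with $e$ running over all edges of $\overline\Omega$.

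Next I choose the test flux using a boundary centroid. Pick a vertex $o\in\Omega$ such that every branch of $\overline\Omega\setminus\{o\}$ contains at most $N/2$ leaves; such a vertex exists by the standard centroid argument for leaf weights. Label the $n$ branches at $o$ by $i=1,\dots,n$, with leaf counts $N_i$ and volumes $V_i$, so that $\sum N_i=N$ and $\sum V_i=|\Omega|-1$. Inside each branch $i$ I take $g$ constant, equal to $c_i$, subject to $\sum c_iN_i=0$. Each edge $e$ inside branch $i$ then has $F_e=c_i\,\ell_e$, where $\ell_e$ is the number of leaves below $e$. The energy becomes $\sum_i c_i^2 E_i$, where $E_i=\sum_{e\subset\text{branch }i}\ell_e^2$ includes the root edge, which contributes $N_i^2$. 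The key branchwise lower bound is
\[
E_i\ \ge\ \text{(value for a complete $q$-ary branch with $N_i$ leaves)},
\]
or more usefully a bound of the form $E_i\ge N_i\,(\alpha N_i+\beta)$ with explicit constants coming from the complete branch. I would prove this by induction on the branch. A subtree whose children carry $m_1,\dots,m_q$ leaves contributes $(\sum m_j)^2$ plus the children's energies, and convexity shows that balanced splits minimize the energy. This is the main obstacle: the bound must be sharp, with equality exactly for complete $q$-ary branches, even though $N_i$ need not be a power of $q$. My first attempt would be the linear-in-volume form $E_i\ge N_i^2+\frac{q}{q-1}(\dots)$, obtained by summing over levels and using $\ell_{\text{parent}}=\sum\ell_{\text{children}}$ together with Cauchy--Schwarz.

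Finally I would solve the finite-dimensional problem. Maximizing $\sum c_i^2E_i\big/\sum c_i^2N_i$ subject to $\sum c_iN_i=0$ is a generalized eigenvalue problem in $\R^n$. Taking two branches with $c_1N_1=-c_2N_2$, or a general secular-equation argument that uses the centroid condition $N_i\le N/2$, gives a ratio at least $\min_i E_i/N_i$ adjusted appropriately. Combining this with the branchwise bound, with $V_i$ and $N_i$ related by $N_i=(n-2)V_i+1$, and with $\sum V_i=|\Omega|-1$, should give the ratio $\ge\frac{(n-1)|\Omega|+1}{n}$. For the ball $B_r$ one checks directly, for example from the explicit spectrum in \cite{AJLY2026}, that $\sigma_1=n/((n-1)|B_r|+1)$. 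Conversely, equality forces equality in the branch bounds and in the quadratic inequality. This makes all $n$ branches complete $q$-ary trees of equal depth, so $\Omega$ is a ball centred at $o$.
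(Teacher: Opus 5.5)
Your plan follows the paper's proof step for step: the cut-flux identity $\ip{\Lambda^{-1}g}{g}=\sum_e F_e(g)^2$, a leaf centroid, fluxes constant on the $n$ branches at $o$, an inductive branch bound, and a finite-dimensional inequality in $\R^n$. The branch bound is less of an obstacle than you fear. The constants are $\alpha=q/(q-1)$ and $\beta=-1/(q-1)$, that is,
\[
E_i\ \ge\ \frac{N_i(qN_i-1)}{q-1}\;=\;N_i\,(qV_i+1).
\]
The induction closes for every $N_i$, not only for powers of $q$. If a vertex has $k=\sum_{j=1}^q k_j$ leaves below it and its children have $k_j$, then
\[
k^2+\sum_j\frac{k_j(qk_j-1)}{q-1}-\frac{k(qk-1)}{q-1}=\frac{q\sum_jk_j^2-k^2}{q-1}\ge0
\]
by Cauchy--Schwarz. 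When $N_i$ is not a power of $q$, equality is simply never attained.

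The genuine gap is the final step, and that is where the centroid does its work. A bound in terms of individual ratios $E_i/N_i$, as in ``at least $\min_iE_i/N_i$ adjusted appropriately'', cannot give the sharp constant. Since $\sum_i(qV_i+1)=q(|\Omega|-1)+n=q|\Omega|+1$, the target $\frac{q|\Omega|+1}{n}$ is the \emph{average} of the branch bounds $qV_i+1$. The minimum, and even the second-largest, of the $E_i/N_i$ can lie below it. Take $n=3$ and let $\Omega$ be the six vertices within distance one of an edge. At either centroid the three branches are complete, with leaf counts $4,2,2$, so $E_i/N_i=7,3,3$, while the target is $13/3$. After inserting the branch bound, what you actually need is
\[
\max_{\substack{c\ne0\\ \sum_iN_ic_i=0}}\frac{\sum_iN_i^2c_i^2}{\sum_iN_ic_i^2}\ \ge\ \frac Nn ,
\]
with equality only when all $N_i$ are equal. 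This fails when one branch carries almost all leaves. It is exactly where $\max_iN_i\le N/2$ and $n\ge3$ enter, and it is Lemma~\ref{lem:centroid-quadratic} of the paper; your proposal neither states nor proves it.

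Your two-branch test can be made to give the inequality if you pair the two largest counts $N_1\ge N_2$. After the branch bound, its value is at least $\alpha\frac{2N_1N_2}{N_1+N_2}+\beta$. Put $t=N_1/N\in[1/n,1/2]$ and use $N_2\ge(N-N_1)/(n-1)$. Then
\[
\frac{2N_1N_2}{N_1+N_2}\ \ge\ \frac{2t(1-t)N}{(n-2)t+1}\ \ge\ \frac Nn,
\]
because $(2t-1)(nt-1)\le0$. This is tight in the edge example above (the harmonic mean of $4$ and $2$ is $8/3=N/n$), so it cannot give rigidity. For that you need a flux spread over all branches, for instance $c_1=1$ and $c_i=-N_1/(N-N_1)$ for $i\ge2$. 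By Cauchy--Schwarz its value in the displayed quotient is at least $\frac{n^2t(1-t)}{n-1}\cdot\frac Nn$, which is strictly larger than $\frac Nn$ for $t\in(1/n,1/2]$; this single test also covers the inequality itself. Finally, once all $N_i$ are equal, you still have to show that each branch attains its own bound, not just the aggregate. This needs the additional test $c_i=1$, $c_j=-1$, as in the proof of Theorem~\ref{thm:ball-volume}.
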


The proof chooses a boundary centroid $o$ and tests the inverse form on fluxes constant on the incident branches. As a corollary of Theorem \ref{thm:main}, we have the following Weinstock inequality.
\begin{corollary}\label{cor:weinstock}
Let $n\ge3$, $r\ge0$, and let $\Omega$ be a finite connected subset of $T_n$. If
\[
|B_r|\le|\Omega|<|B_{r+1}|,
\]
then
\[
\sigma_1(\Omega) \leq \frac{n-2}{(n-1)^{r+1}-1}=\sigma_1(B_r).
\]
Equality holds if and only if $\Omega=B_r$.
\end{corollary}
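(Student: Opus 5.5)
The corollary follows from Theorem~\ref{thm:main} together with an explicit computation of $\sigma_1(B_r)$. The plan has three steps: apply the theorem, use monotonicity of the bound in $|\Omega|$, and identify the resulting value with the first eigenvalue of the ball.

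\emph{Volume of the ball.} In $T_n$ the ball has
\[
|B_r| = 1 + n\sum_{k=0}^{r-1}(n-1)^k = 1+\frac{n\bigl((n-1)^r-1\bigr)}{n-2}.
\]
Hence
\[
(n-1)|B_r|+1 = n-1+1+\frac{n(n-1)\bigl((n-1)^r-1\bigr)}{n-2} = \frac{n\bigl((n-2)+(n-1)^{r+1}-(n-1)\bigr)}{n-2} = \frac{n\bigl((n-1)^{r+1}-1\bigr)}{n-2}.
\]
It follows that
\[
\frac{n}{(n-1)|B_r|+1}=\frac{n-2}{(n-1)^{r+1}-1}.
\]

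\emph{Applying the theorem.} The map $m\mapsto n/((n-1)m+1)$ is strictly decreasing. Therefore the hypothesis $|\Omega|\ge|B_r|$ and Theorem~\ref{thm:main} give
\[
\sigma_1(\Omega)\le \frac{n}{(n-1)|\Omega|+1}\le \frac{n}{(n-1)|B_r|+1}=\frac{n-2}{(n-1)^{r+1}-1}.
\]

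\emph{Identifying $\sigma_1(B_r)$.} The equality case of Theorem~\ref{thm:main} applied to $\Omega=B_r$ gives $\sigma_1(B_r)=n/((n-1)|B_r|+1)$. By the computation above, this equals the stated value. This also agrees with the diagonalization in \cite{AJLY2026}.

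\emph{Equality case.} Suppose equality holds throughout the chain. The second inequality forces $|\Omega|=|B_r|$, since the bound is strictly decreasing in $|\Omega|$. The first inequality is then an equality in Theorem~\ref{thm:main}, so $\Omega$ is a ball. A ball with $|B_r|$ vertices is $B_r$, because $|B_s|$ is strictly increasing in $s$ and the centre is immaterial by homogeneity. Conversely, $\Omega=B_r$ attains equality by the computation of $\sigma_1(B_r)$.

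The upper hypothesis $|\Omega|<|B_{r+1}|$ is not needed for the inequality itself; it only fixes the relevant $r$, so that the stated radius is the one determined by $|\Omega|$. The only real content is the arithmetic identity for $(n-1)|B_r|+1$, which the computation above settles, so there is no genuine obstacle.
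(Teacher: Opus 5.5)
Your proposal is correct and is essentially the paper's argument. The paper splits off the case $|\Omega|=|B_r|$ and handles it with Theorem~\ref{thm:ball-volume} via Lemma~\ref{lem:boundary-volume}, then uses Theorem~\ref{thm:main} with strict monotonicity for $|B_r|<|\Omega|<|B_{r+1}|$; you run both cases through Theorem~\ref{thm:main} at once, and your computation of $(n-1)|B_r|+1$ and your remark that the upper hypothesis is not needed are both accurate.
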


The paper is organized as follows. Section~\ref{sec:preliminaries} fixes the energy convention and records the inverse boundary form used throughout. Section~\ref{sec:balls} recalls the Steklov spectrum of a regular ball as a specialization of the formula for level-regular trees in \cite{AJLY2026}. Section~\ref{sec:branch-estimates} contains the boundary centroid lemma, the branchwise flux-energy lower bound, and the centroid quadratic inequality. The proof of Theorem~\ref{thm:main} is given in Section~\ref{sec:main-proof}, where the comparison between consecutive balls is also derived.

\section{Finite subtrees of the regular tree and the Steklov operator}
\label{sec:preliminaries}

Let $T_n=(V,E)$ be the infinite $n$-regular tree, $n\ge3$. For a finite connected set $\Omega\subset V$, define
\begin{equation}\label{eq:boundary-def}
\delta\Omega
 :=\{y\in V\setminus\Omega:y\sim x\text{ for some }x\in\Omega\},
 \qquad
 \overline\Omega:=\Omega\cup\delta\Omega.   
\end{equation}
The subgraph induced by $\overline\Omega$ is a finite tree whose leaves are precisely the vertices of $\delta\Omega$. Fixing $o\in V$, write
\[
 B_r(o):=\{x\in V:d(o,x)\le r\},
 \qquad
 S_r(o):=\{x\in V:d(o,x)=r\}.
\]
When the centre is immaterial, we write $B_r$ and $S_r$. Set $q=n-1$. Then
\[
\delta B_r=S_{r+1}, \qquad |S_{r+1}|=nq^r.
\]

The following identity relates boundary cardinality and vertex count.

\begin{lemma}\label{lem:boundary-volume}
For every finite connected set $\Omega\subset T_n$,
\begin{equation}\label{eq:boundary-volume}
 |\delta\Omega|=(n-2)|\Omega|+2.
\end{equation}
Consequently,
\[
 |\Omega|=|B_r|
 \quad\Longleftrightarrow\quad
 |\delta\Omega|=|S_{r+1}|.
\]
\end{lemma}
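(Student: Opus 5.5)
The plan is to count edges in the finite tree $\overline\Omega$ in two ways. First, since $\Omega$ is connected and $T_n$ is a tree, the induced subgraph on $\Omega$ is a finite tree. It therefore has exactly $|\Omega|-1$ edges. Second, the induced subgraph on $\overline\Omega$ is also a tree, with $|\Omega|+|\delta\Omega|$ vertices and hence $|\Omega|+|\delta\Omega|-1$ edges. The edges of $\overline\Omega$ split into two kinds: edges with both endpoints in $\Omega$, and edges joining $\Omega$ to $\delta\Omega$. There are no edges between two vertices of $\delta\Omega$; such an edge would close a cycle through $\Omega$, which is impossible in a tree.

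Next we count the edges between $\Omega$ and $\delta\Omega$ by degrees. Every $x\in\Omega$ has degree $n$ in $T_n$, and each neighbor of $x$ lies either in $\Omega$ or in $\delta\Omega$. Summing over $x\in\Omega$ gives
\[
n|\Omega| = 2(|\Omega|-1) + e(\Omega,\delta\Omega).
\]
Here $e(\Omega,\delta\Omega)$ denotes the number of edges between $\Omega$ and $\delta\Omega$. Each $y\in\delta\Omega$ has exactly one neighbor in $\Omega$: two such neighbors, together with the path joining them inside the connected set $\Omega$, would form a cycle. Hence $e(\Omega,\delta\Omega)=|\delta\Omega|$, and therefore
\[
|\delta\Omega|=n|\Omega|-2|\Omega|+2=(n-2)|\Omega|+2.
\]
(Alternatively, the edge count of $\overline\Omega$ gives the same identity directly.) This step also confirms the preliminary claim that the vertices of $\delta\Omega$ are exactly the leaves of $\overline\Omega$.

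For the consequence, the map $t\mapsto (n-2)t+2$ is strictly increasing because $n\ge3$, so it is injective. Applying the identity to $\Omega$ and to $B_r$, and using $\delta B_r=S_{r+1}$, gives the equivalence $|\Omega|=|B_r|\iff|\delta\Omega|=|S_{r+1}|$. As a consistency check, $|B_r|=1+n\frac{q^r-1}{q-1}$ with $q=n-1$, and then $(n-2)|B_r|+2=nq^r=|S_{r+1}|$. The only step needing care is the uniqueness of the $\Omega$-neighbor of each boundary vertex, which is the acyclicity argument above; otherwise the proof is a direct count.
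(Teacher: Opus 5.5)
Your proposal is correct and follows the paper's proof: the same degree count over $\Omega$, giving $n|\Omega|=2(|\Omega|-1)+|\delta\Omega|$, followed by applying the identity to $\Omega$ and $B_r$. You also make explicit two points the paper leaves implicit: each boundary vertex has exactly one neighbour in $\Omega$ (by acyclicity), and $t\mapsto(n-2)t+2$ is injective because $n\ge3$.
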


\begin{proof}
The tree induced by $\Omega$ has $|\Omega|-1$ edges. Counting the edges incident to vertices of $\Omega$ gives
\[
 n|\Omega| = 2\left(|\Omega|-1\right)+|\delta\Omega|,
\]
which is \eqref{eq:boundary-volume}. The last assertion follows by applying the identity to $\Omega$ and $B_r$.
\end{proof}

For functions $u,v:\Omega\to\R$ and $f,g:\delta\Omega\to\R$, define the inner products
\[
 \ip{u}{v}_{\Omega}:=\sum_{x\in\Omega}u(x)v(x),
 \qquad
 \ip{f}{g}_{\delta\Omega}:=\sum_{z\in\delta\Omega}f(z)g(z).
\]

We use the standard positive graph Laplacian $L=D-A$. For a function $u:\overline\Omega\to\R$, define
\[
 \Delta u(x):=\sum_{y\sim x}\left(u(x)-u(y)\right),
 \qquad x\in\Omega.
\]
For $z\in\delta\Omega$, define the discrete outward normal derivative
\[
 \partial_\nu u(z):=u(z)-u(z^-),
\]
where $z^-$ is the unique neighbour of $z$ in $\Omega$. Here $\nu$ denotes the outward normal direction at the boundary vertex $z$.

For functions $u,v:\overline\Omega\to\R$, the Dirichlet energy is
\[
 \mathcal{E}_\Omega(u,v)
 :=\sum_{\{x,y\}\in E(\overline\Omega)}
   \left(u(x)-u(y)\right)\left(v(x)-v(y)\right),
\]
where the sum is over unoriented edges, and each edge has unit conductance. Discrete summation by parts gives the Green formula
\begin{equation}\label{eq:green}
 \mathcal{E}_\Omega(u,v)
 =\ip{\Delta u}{v}_{\Omega}
  +\ip{\partial_\nu u}{v}_{\delta\Omega}.
\end{equation}

For $f:\delta\Omega\to\R$, we denote by $u_f$ the unique harmonic extension satisfying
\[
\Delta u_f=0\quad\text{on }\Omega,
\qquad
u_f=f\quad\text{on }\delta\Omega.
\]
The Dirichlet-to-Neumann operator is then defined by
\[
\Lambda_\Omega f:=\partial_\nu u_f.
\]

The following lemma collects the elementary properties needed below.

\begin{lemma}\label{lem:dtn-properties}
For every boundary function $f:\delta\Omega\to\R$, the harmonic extension $u_f$ exists and is unique. Moreover, $\Lambda_\Omega$ is self-adjoint and nonnegative,
\begin{equation}\label{eq:dtn-energy}
 \ip{\Lambda_\Omega f}{g}_{\delta\Omega}
 =\mathcal{E}_\Omega(u_f,u_g),
 \qquad
 \ip{\Lambda_\Omega f}{f}_{\delta\Omega}
 =\mathcal{E}_\Omega(u_f,u_f),
\end{equation}
and
\[
\mathrm{Ker}\,\Lambda_\Omega=\operatorname{span}\{\one\}.
\]
In addition, $\Lambda_\Omega$ leaves $\mathcal{H}_0(\delta\Omega)$ invariant and is positive definite on that subspace, where
\[
 \mathcal{H}_0(\delta\Omega)
 :=\left\{g:\delta\Omega\to\R: \sum_{z\in\delta\Omega} g(z)=0\right\}.
\]
\end{lemma}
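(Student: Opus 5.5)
We prove the statement in the order it is written: existence and uniqueness of $u_f$ first, then the Green-formula identities \eqref{eq:dtn-energy}, then the kernel, and finally the invariance and definiteness on $\mathcal{H}_0(\delta\Omega)$.

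\emph{Existence and uniqueness.} The Dirichlet problem $\Delta u=0$ on $\Omega$, $u=f$ on $\delta\Omega$ is a square linear system in the unknowns $u|_\Omega$. It therefore suffices to show that the homogeneous problem ($f=0$) has only the trivial solution. Suppose $\Delta u=0$ on $\Omega$ and $u=0$ on $\delta\Omega$. Applying \eqref{eq:green} with $v=u$ gives
\[
\mathcal{E}_\Omega(u,u)=\ip{\Delta u}{u}_\Omega+\ip{\partial_\nu u}{u}_{\delta\Omega}=0 .
\]
Hence $u$ is constant on the connected tree $\overline\Omega$. Since $u$ vanishes on $\delta\Omega\neq\emptyset$, we get $u\equiv0$. (A maximum principle argument would work equally well.) The map $f\mapsto u_f$ is then linear, and so is $\Lambda_\Omega$.

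\emph{Energy identity and self-adjointness.} Apply \eqref{eq:green} with $u=u_f$ and $v=u_g$. The interior term vanishes because $\Delta u_f=0$ on $\Omega$, and $u_g=g$ on $\delta\Omega$. This gives
\[
\ip{\Lambda_\Omega f}{g}_{\delta\Omega}=\mathcal{E}_\Omega(u_f,u_g).
\]
The right-hand side is symmetric in $f$ and $g$, so $\Lambda_\Omega$ is self-adjoint. Taking $g=f$ gives $\ip{\Lambda_\Omega f}{f}_{\delta\Omega}=\mathcal{E}_\Omega(u_f,u_f)\ge0$, so $\Lambda_\Omega$ is nonnegative.

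\emph{Kernel.} First, $u_{\one}=\one$, so $\Lambda_\Omega\one=0$. Conversely, suppose $\Lambda_\Omega f=0$. Then $\mathcal{E}_\Omega(u_f,u_f)=\ip{\Lambda_\Omega f}{f}_{\delta\Omega}=0$, so $u_f$ is constant on the connected graph $\overline\Omega$. Hence $f$ is constant.

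\emph{Invariance and definiteness on $\mathcal{H}_0(\delta\Omega)$.} Self-adjointness together with $\Lambda_\Omega\one=0$ gives
\[
\sum_{z\in\delta\Omega}\Lambda_\Omega f(z)=\ip{\Lambda_\Omega f}{\one}_{\delta\Omega}=\ip{f}{\Lambda_\Omega\one}_{\delta\Omega}=0 .
\]
So the range of $\Lambda_\Omega$ lies in $\mathcal{H}_0(\delta\Omega)=\one^\perp$, and in particular $\mathcal{H}_0(\delta\Omega)$ is invariant. Now take $g\in\mathcal{H}_0(\delta\Omega)$ with $g\neq0$. Then $g$ is not a nonzero constant: a nonzero constant has nonzero sum, since $\delta\Omega\neq\emptyset$. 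So $g\notin\operatorname{Ker}\Lambda_\Omega$, and by the kernel computation $u_g$ is nonconstant. Therefore $\ip{\Lambda_\Omega g}{g}_{\delta\Omega}=\mathcal{E}_\Omega(u_g,u_g)>0$, which is positive definiteness.

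The only step requiring an actual argument is the uniqueness of the harmonic extension; every other step follows directly from \eqref{eq:green} and the connectedness of $\overline\Omega$.
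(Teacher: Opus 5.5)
Your proposal is correct and follows the paper's proof essentially step by step: the energy identity via Green's formula, self-adjointness and nonnegativity from symmetry, the kernel from vanishing energy on the connected tree, and positive definiteness on $\mathcal{H}_0(\delta\Omega)$ from the kernel description. There are two minor differences. You prove existence and uniqueness of $u_f$ directly, via the energy argument on the homogeneous square system, where the paper cites a standard reference. You also obtain the zero-sum range from self-adjointness and $\Lambda_\Omega\one=0$ rather than from Green's formula with $v\equiv1$.
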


\begin{proof}
The existence and uniqueness of the harmonic extension $u_f$ is classical; see \cite[Theorem 1.38]{Grigor2018}. We proceed to derive the remaining properties.

Apply \eqref{eq:green} to $u_f$ and $u_g$. Since $u_f$ is harmonic in $\Omega$ and $u_g=g$ on the boundary, we obtain
\[
 \mathcal{E}_\Omega(u_f,u_g)
 =\sum_{z\in\delta\Omega}(\partial_\nu u_f)(z)g(z)
 =\ip{\Lambda_\Omega f}{g}_{\delta\Omega}.
\]
The symmetry of the energy gives self-adjointness, and taking $g=f$ gives \eqref{eq:dtn-energy} and nonnegativity. If $\Lambda_\Omega f=0$, then \eqref{eq:dtn-energy} implies that $u_f$ is constant on $\overline\Omega$. Conversely, every constant function belongs to the kernel. This proves the asserted description of the kernel.

Finally, taking $v\equiv1$ in \eqref{eq:green} yields
\[
\sum_{z\in\delta\Omega}(\Lambda_\Omega f)(z)=0,
\]
that is,
\[
\ip{\Lambda_\Omega f}{\one}_{\delta\Omega}=0.
\]
Hence the range of $\Lambda_\Omega$ is contained in $\mathcal{H}_0(\delta\Omega)$. Self-adjointness then shows that $\mathcal{H}_0(\delta\Omega)=\one^\perp$ is invariant. The kernel description implies that the restriction to this subspace is positive definite.
\end{proof}

Write the eigenvalues of $\Lambda_\Omega$ as
\[
 0=\sigma_0(\Omega)<\sigma_1(\Omega)\le\cdots
 \le\sigma_{|\delta\Omega|-1}(\Omega).
\]
By the Rayleigh principle and \eqref{eq:dtn-energy}, we have
\begin{equation}\label{eq:rayleigh}
 \sigma_1(\Omega)
 =\min_{\substack{f\neq0\\ \ip{f}{\one}_{\delta\Omega}=0}}
 \frac{\ip{\Lambda_\Omega f}{f}_{\delta\Omega}}
      {\ip{f}{f}_{\delta\Omega}}
 =\min_{\substack{f\neq0\\ \ip{f}{\one}_{\delta\Omega}=0}}
 \frac{\mathcal{E}_\Omega(u_f,u_f)}
      {\ip{f}{f}_{\delta\Omega}}.
\end{equation}
 
For the extremal problem it is more convenient to invert \eqref{eq:rayleigh}. A related inverse boundary form appears in the recent work of Ai, Lin, and Shi \cite[Propositions~3.1 and~3.2]{ALS2026} on diameter-constrained Steklov problems for general trees. Their setting is more general. Nevertheless, we state and prove the result here for completeness. The exact cut decomposition and its equality case are needed in our rigidity argument.

\begin{proposition}\cite{ALS2026}\label{prop:inverse-form}
Fix $o\in \overline\Omega$ and orient every edge away from $o$. For an oriented edge $e$, let $U_e$ be the set of boundary vertices in the component of $\overline\Omega-\{e\}$ not containing $o$. Define
\begin{equation}\label{eq:cut-form}
 Q_\Omega(g)
 :=\sum_{e\in E(\overline\Omega)}
   \left(\sum_{\omega\in U_e} g(\omega)\right)^2,
 \qquad g\in\mathcal{H}_0(\delta\Omega).
\end{equation}
Then
\begin{equation}\label{eq:inverse-rayleigh}
 \frac1{\sigma_1(\Omega)}
 =\max_{0\neq g\in\mathcal{H}_0(\delta\Omega)}
  \frac{Q_\Omega(g)}{\ip{g}{g}_{\delta\Omega}}.
\end{equation}
The form $Q_\Omega$ is independent of the choice of $o$.
\end{proposition}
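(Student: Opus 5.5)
Since $\Lambda_\Omega$ restricted to $\mathcal H_0(\delta\Omega)$ is positive definite (Lemma~\ref{lem:dtn-properties}), it is invertible there. So I will first identify $Q_\Omega(g)=\ip{\Lambda_\Omega^{-1}g}{g}_{\delta\Omega}$ for $g\in\mathcal H_0(\delta\Omega)$. Then \eqref{eq:inverse-rayleigh} follows from the Rayleigh principle for the inverse operator. The largest eigenvalue of $\Lambda_\Omega^{-1}$ on $\mathcal H_0(\delta\Omega)$ is $1/\sigma_1(\Omega)$, because the eigenvalues of $\Lambda_\Omega$ on that invariant subspace are exactly $\sigma_1,\dots,\sigma_{|\delta\Omega|-1}$.

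To compute $\ip{\Lambda_\Omega^{-1}g}{g}$, take $g\in\mathcal H_0$ and let $f=\Lambda_\Omega^{-1}g$, normalized in $\mathcal H_0$; the normalization is irrelevant since $\ip{\one}{g}=0$. Let $u=u_f$, so that $\partial_\nu u=g$ on $\delta\Omega$ and $\Delta u=0$ on $\Omega$. The key observation is that on a tree the current through each edge is determined by $g$. For an edge $e=(x\to y)$ oriented away from $o$, sum $\Delta u$ over the vertices of $\Omega$ in the subtree $T_e$ beyond $e$ and add the normal derivatives at the boundary vertices of $T_e$. All interior edges of the subtree cancel, which gives
\[
u(y)-u(x)=\sum_{\omega\in U_e} g(\omega).
\]
Concretely, this is the Green formula \eqref{eq:green} applied with $v=\one_{T_e}$, or a direct telescoping. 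In the boundary-leaf case $y=z\in\delta\Omega$ the identity is just the definition $\partial_\nu u(z)=g(z)$. There is one care point: the root $o$ may lie in $\delta\Omega$. Then the edge at $o$ is oriented toward $\Omega$, and $U_e$ is everything except $o$. Its sum equals $-g(o)$ because $g$ has mean zero, which again matches. So the orientation sign only flips the sign of the flux, and that is harmless after squaring.

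With the edge currents in hand, I apply \eqref{eq:dtn-energy}:
\[
\ip{\Lambda_\Omega^{-1}g}{g}=\ip{f}{\Lambda_\Omega f}=\mathcal E_\Omega(u,u)=\sum_e (u(y)-u(x))^2=\sum_e\Big(\sum_{U_e}g\Big)^2=Q_\Omega(g).
\]
The independence of $o$ also falls out. Changing the root along a path replaces $U_e$ by its complement in $\delta\Omega$ for the edges on the path, and the complement's sum is $-\sum_{U_e}g$ because $g\in\mathcal H_0$. Alternatively, the formula already exhibits $Q_\Omega(g)$ as an intrinsic quantity.

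The main obstacle is bookkeeping rather than substance. I need to check the flux identity for every edge type, namely interior edges, edges into boundary leaves, and the case where the root is a boundary vertex, with consistent signs. I also need to verify that $f=\Lambda_\Omega^{-1}g$ is well defined, which I will justify using the invariance and positive definiteness from Lemma~\ref{lem:dtn-properties}.
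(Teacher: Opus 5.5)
Your proposal is correct and follows the paper's proof essentially step for step. It uses the cut-flux identity $u(e^+)-u(e^-)=\sum_{\omega\in U_e}g(\omega)$, obtained by summing the divergence over the far component (equivalently, Green's formula with $v=\one_{T_e}$). From this it gets $Q_\Omega(g)=\ip{g}{\Lambda_\Omega^{-1}g}_{\delta\Omega}$, then applies the Rayleigh principle for the inverse on $\mathcal{H}_0(\delta\Omega)$ (which the paper checks by an eigenbasis expansion), and proves root independence via complementary cut sums. Your separate check of the case $o\in\delta\Omega$ is sound but not needed, since the divergence/Green argument already covers it.
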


\begin{proof}
By Lemma \ref{lem:dtn-properties}, the restriction
\[
 L:=\Lambda_\Omega\big|_{\mathcal{H}_0(\delta\Omega)}
\]
is a positive-definite self-adjoint operator and is therefore invertible. Given $g\in\mathcal{H}_0(\delta\Omega)$, set $f=L^{-1}g$ and let $u=u_f$. Equation \eqref{eq:dtn-energy} yields
\begin{equation}\label{eq:inverse-energy}
 \ip{g}{L^{-1}g}_{\delta\Omega}
 =\ip{\Lambda_\Omega f}{f}_{\delta\Omega}
 =\mathcal{E}_\Omega(u,u).
\end{equation}

We now determine every edge difference from the boundary flux $g$. Write an oriented edge as $e=(e^-,e^+)$, where $e^-$ is the endpoint closer to $o$, and let $D_e$ be the component of
$\overline\Omega-e$ containing $e^+$. Thus $U_e=D_e\cap\delta\Omega$. Summing the discrete divergence over $D_e$ gives
\[
 \sum_{x\in D_e\cap\Omega}\Delta u(x) +\sum_{\omega\in U_e}\partial_\nu u(\omega) =u(e^+)-u(e^-).
\]
Indeed, the contributions of every edge contained in $D_e$ cancel in pairs, and the only uncancelled contribution comes from the cut edge $e$. Since $u$ is harmonic in $\Omega$ and
$\partial_\nu u=g$ on $\delta\Omega$, it follows that
\begin{equation}\label{eq:edge-flux}
 u(e^+)-u(e^-)=\sum_{\omega\in U_e} g(\omega).
\end{equation}
Consequently, summing over unoriented edges,
\[
 \mathcal{E}_\Omega(u,u)
 =\sum_{e\in E(\overline\Omega)}\left(u(e^+)-u(e^-)\right)^2
 =Q_\Omega(g).
\]
Together with \eqref{eq:inverse-energy}, this proves the operator identity
\begin{equation}\label{eq:inverse-form-identity}
 Q_\Omega(g)=\ip{g}{L^{-1}g}_{\delta\Omega},
 \qquad g\in\mathcal{H}_0(\delta\Omega).
\end{equation}

For clarity, we finish the variational step without appealing to an inverse Rayleigh principle. Choose an orthonormal basis $\{\varphi_j\}_{j=1}^{m}$ of $\mathcal{H}_0(\delta\Omega)$ consisting of Steklov eigenfunctions, where $m=|\delta\Omega|-1$ and $L\varphi_j=\sigma_j(\Omega)\varphi_j$. If $g=\sum_{j=1}^{m}c_j\varphi_j$, then
\[
 \frac{Q_\Omega(g)}{\ip{g}{g}_{\delta\Omega}}
  =\frac{\displaystyle\sum_{j=1}^{m}\frac{c_j^2}{\sigma_j(\Omega)}}{\displaystyle\sum_{j=1}^{m}c_j^2} \le\frac1{\sigma_1(\Omega)}.
\]
Equality is attained by taking $g$ in the $\sigma_1(\Omega)$-eigenspace. This proves \eqref{eq:inverse-rayleigh}.

It remains only to check that the cut form does not depend on the root. An unoriented edge $e$ partitions the boundary into two sets, say $A_e$ and $\delta\Omega\setminus A_e$. Changing the root can only replace one side by the other. Since
$g\in\mathcal{H}_0(\delta\Omega)$,
\[
 \sum_{z\in A_e}g(z)
 =-\sum_{z\in\delta\Omega\setminus A_e}g(z),
\]
and hence the squared contribution of $e$ is unchanged. This proves the last assertion.
\end{proof}

\begin{remark}\label{rem:inverse-equivalence}
Formula \eqref{eq:inverse-rayleigh} is the Rayleigh principle for $L^{-1}$, not the termwise reciprocal of \eqref{eq:rayleigh}. The two principles are equivalent because $f\mapsto g=Lf$ is a bijection of $\mathcal{H}_0(\delta\Omega)$. Under this change of variables,
\[
 \ip{g}{L^{-1}g}_{\delta\Omega} =\ip{Lf}{f}_{\delta\Omega},
\]
while the denominator becomes $\ip{Lf}{Lf}_{\delta\Omega}$. The cut identity \eqref{eq:inverse-form-identity} is the additional tree structure that makes the inverse formulation useful here.
\end{remark}

\section{The Steklov spectrum of a regular ball}
\label{sec:balls}

The spectrum of $B_r$ is a special case of \cite[Theorem~1.6]{AJLY2026}, applied to the level-regular tree of depth $r+1$ whose root has $n$ children and whose remaining interior vertices have $n-1$ children. For completeness, we state the result explicitly and give the argument in our notation. The resulting Haar-type orthogonal decomposition of the boundary will be used in the proof of the main theorem.

\begin{theorem}[{\cite{AJLY2026}}]\label{thm:ball-spectrum}
Let $n\ge3$, $q=n-1$, and $r\ge0$. The distinct nonzero Steklov eigenvalues of $B_r$ are
\begin{equation}\label{eq:ball-eigenvalues}
 \mu_\ell=\frac{q-1}{q^\ell-1},
 \qquad 1\le\ell\le r+1.
\end{equation}
Their multiplicities are
\begin{equation}\label{eq:ball-multiplicities}
 \operatorname{mult}(\mu_\ell)=
 \begin{cases}
 n-1,&\ell=r+1,\\[2mm]
 n(q-1)q^{r-\ell},&1\le\ell\le r.
 \end{cases}
\end{equation}
The eigenvalue $0$ is simple. In particular,
\[
 \sigma_1(B_r)=\mu_{r+1}
 =\frac{n-2}{(n-1)^{r+1}-1}.
\]
\end{theorem}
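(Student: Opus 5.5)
I will compute the spectrum of $B_r$ with the cut form of Proposition~\ref{prop:inverse-form}, rooted at the centre $o$, by producing an explicit orthogonal basis of $\mathcal{H}_0(\delta B_r)$ consisting of eigenvectors of $L^{-1}$. Since $\overline{B_r}$ is level-regular, I will use a Haar-type decomposition. For each vertex $v$ at depth $k$, $0\le k\le r$, let $W_v$ be the space of boundary functions $g$ with three properties: $g$ is supported on the boundary descendants of $v$; $g$ is constant on the boundary descendants of each child of $v$; and $\sum g=0$. The dimension of $W_v$ is $n-1$ for $v=o$ and $q-1$ for $k\ge1$. These spaces are mutually orthogonal, and their sum is all of $\mathcal{H}_0$. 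The orthogonality is the standard nested-partition argument. The dimension count uses $\dim\mathcal{H}_0=nq^r-1$, and indeed
\[
(n-1)+\sum_{k=1}^{r} nq^{k-1}(q-1)=(n-1)+n(q^r-1)=nq^r-1 .
\]

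Next I will evaluate $Q$ on a fixed $W_v$. Take $g\in W_v$ with $v$ at depth $k$, and let $a_c$ denote its constant value below the child $c$ of $v$. Every child subtree of $v$ contains $q^{r-k}$ boundary vertices.

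\emph{Edges not below $v$.} For such an edge $e$, the set $U_e$ either contains all of the support of $g$ or misses it entirely, so $\sum_{U_e}g=0$.

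\emph{Edges below $v$.} Take an edge whose lower endpoint is at depth $j>k$ and lies under the child $c$. Then $U_e$ consists of $q^{r+1-j}$ boundary vertices, all carrying the value $a_c$. For each $j$ there are $q^{j-k-1}$ such edges under $c$, so
\[
Q(g)=\sum_c a_c^2\sum_{j=k+1}^{r+1} q^{j-k-1}\,q^{2(r+1-j)} .
\]
Meanwhile
\[
\ip{g}{g}=\sum_c a_c^2\, q^{r-k} .
\]
Hence the ratio of $Q(g)$ to $\ip{g}{g}$ is the same constant for every $g\in W_v$; the coefficients $a_c$ cancel.

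To evaluate the constant, substitute $m=r+1-j$, which runs from $0$ to $r-k$. The ratio becomes
\[
\sum_{m=0}^{r-k}q^{m}=\frac{q^{\ell}-1}{q-1},\qquad \ell=r+1-k .
\]

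Since $Q(g)=\ip{g}{L^{-1}g}$ by \eqref{eq:inverse-form-identity}, I also need $L^{-1}$ to preserve each $W_v$. I will check this through the polarized form
\[
Q(g,h)=\sum_e\Big(\sum_{U_e}g\Big)\Big(\sum_{U_e}h\Big).
\]
Take $g\in W_v$ and $h\in W_w$ with $v\ne w$. If neither of $v,w$ is a descendant of the other, no edge has a nonzero cut sum for both. If $w$ lies strictly below $v$, then every edge below $w$ has $\sum_{U_e}g$ equal to a fixed multiple of $|U_e|$. Summing over the edges at one depth below $w$ then gives a multiple of $\sum h=0$.

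So $Q(g,h)=0$ across different blocks, and $L^{-1}$ acts on $W_v$ as the scalar $(q^\ell-1)/(q-1)$. Consequently $L$ has eigenvalue $\mu_\ell=(q-1)/(q^\ell-1)$ on $W_v$, with $\ell=r+1-k$. The multiplicities come from counting depths. The value $\ell=r+1$ comes only from $v=o$, giving multiplicity $n-1$. For $1\le \ell\le r$, the depth is $k=r+1-\ell\ge1$, and there are $nq^{k-1}=nq^{r-\ell}$ vertices at that depth, each contributing $q-1$, which gives $n(q-1)q^{r-\ell}$.

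Because $\mu_\ell$ is strictly decreasing in $\ell$, the values are distinct, and the smallest is $\mu_{r+1}=(n-2)/((n-1)^{r+1}-1)$. This equals $\sigma_1(B_r)$. Finally, $0$ is simple by Lemma~\ref{lem:dtn-properties}.

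The main obstacle is the block-diagonality of $Q$ across nested vertices, together with getting the edge counts right, including the boundary edges at depth $r+1$. Everything else is bookkeeping.
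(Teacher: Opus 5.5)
Your proof is correct, but it computes the eigenvalues by a different route from the paper's. Both arguments use the same Haar decomposition $\operatorname{span}\{\one\}\oplus\bigoplus_{v}\mathcal{W}_v$ of $L^2(\delta B_r)$; they differ in how the action on each block is found.

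The paper works on the potential side. For $f\in\mathcal{W}_v$ it writes down the harmonic extension $u(x)=c_w h_{d(v,x)}$ with $h_t=(1-q^{-t})/(1-q^{-\ell})$. It checks harmonicity through the recurrence $(q+1)h_t-h_{t-1}-qh_{t+1}=0$ and the condition $\sum_w c_w=0$, and then reads off $\partial_\nu u=(h_\ell-h_{\ell-1})c_w$. This shows directly that $\Lambda_{B_r}$ maps each $\mathcal{W}_v$ into itself as a scalar, so no interaction between different blocks ever has to be examined.

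You work on the flux side with the cut form $Q_\Omega$ of Proposition~\ref{prop:inverse-form}. The diagonal values then come from pure edge counting, namely the geometric sum $\sum_{m=0}^{r-k}q^m=(q^\ell-1)/(q-1)$, with no recurrence to solve. The cost is that a constant Rayleigh quotient on a subspace does not by itself give invariance. You therefore had to check separately that the polarized form vanishes between different blocks, and you did this correctly:
\begin{itemize}
\item incomparable $v,w$ have no edge with both cut sums nonzero;
\item in the nested case, $\sum_{U_e}g=a_c|U_e|$ with $|U_e|$ constant on each level below $w$, which reduces each level's contribution to a multiple of $\sum h=0$.
\end{itemize}
Polarization inside each block and self-adjointness of $L^{-1}$ on $\mathcal{H}_0(\delta\Omega)$ then give $L^{-1}=\frac{q^\ell-1}{q-1}\operatorname{Id}$ on $\mathcal{W}_v$. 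You left that last step implicit, but it is standard.

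Your route has a useful side effect. Your top-block computation ($v=o$) is exactly the equality case $W(\mathcal{T})/k=(q^{r+1}-1)/(q-1)$ of Lemma~\ref{lem:branch-moment} for a complete $q$-ary branch, which is precisely the quantity the main proof compares against. The paper's route, in exchange, does not depend on the inverse-form machinery and exhibits the harmonic extensions of the eigenfunctions explicitly.
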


Fix a centre $o$ and identify $\delta  B_r$ with $S_{r+1}(o)$. For a vertex $v\in B_r$, let $\operatorname{ch}(v)$ be the set of neighbours of $v$ lying one level farther from $o$. Thus
\begin{equation}\label{eq:children-count}
 |\operatorname{ch}(v)|=
 \begin{cases}
 n,&v=o,\\
 n-1,&v\neq o.
 \end{cases}
\end{equation}
For $w\neq o$, define its descendant block by
\[
 D_w:=\{z\in S_{r+1}(o):w\in[o,z]\},
\]
where $[o,z]$ is the unique geodesic from $o$ to $z$. We also set $D_o=S_{r+1}(o)$.

For $v\in B_r$, let $\mathcal W_v$ be the space of functions $f:\delta B_r\to\mathbb{R}$ such that
\begin{enumerate}
\item $f$ vanishes outside $D_v$;
\item for every $w\in\operatorname{ch}(v)$, the restriction of $f$ to $D_w$ is a constant $c_w\in\mathbb{R}$;
\item $\sum\limits_{w\in\operatorname{ch}(v)}c_w=0$.
\end{enumerate}
The blocks $D_w$, $w\in\operatorname{ch}(v)$, have equal cardinality, so every function in $\mathcal{W}_v$ has zero sum on $D_v$. This decomposition is a discrete analogue of the spherical harmonic decomposition of the boundary of a Euclidean ball.

\begin{lemma}\label{lem:haar}
The spaces $\mathcal{W}_v$, $v\in B_r$, are mutually orthogonal, and
\begin{equation}\label{eq:haar-decomposition}
 L^2(\delta B_r)
 =\operatorname{span}\{\one\}
 \oplus\bigoplus_{v\in B_r}\mathcal{W}_v.
\end{equation}
Moreover,
\[
 \dim\mathcal{W}_v=
 \begin{cases}
 n-1,&v=o,\\
 q-1,&v\neq o.
 \end{cases}
\]
\end{lemma}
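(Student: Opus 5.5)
Three things need to be checked: orthogonality, the dimensions, and that the spaces together with $\one$ span $L^2(\delta B_r)$. We do them in that order, and the spanning part follows from a dimension count.

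\emph{Dimensions.} A function in $\mathcal W_v$ is determined by the constants $(c_w)_{w\in\operatorname{ch}(v)}$, subject to the single linear constraint $\sum_w c_w=0$. Conversely, any such tuple defines an element of $\mathcal W_v$, because the blocks $D_w$, $w\in\operatorname{ch}(v)$, are disjoint and partition $D_v$. Hence $\dim\mathcal W_v=|\operatorname{ch}(v)|-1$. By \eqref{eq:children-count} this equals $n-1$ for $v=o$ and $q-1$ for $v\ne o$.

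\emph{Orthogonality.}
\begin{itemize}
\item Each $f\in\mathcal W_v$ has zero total sum, since the blocks have equal size $q^{r-d(o,v)}$. So $\mathcal W_v\perp\one$.
\item Take $v\neq v'$. If neither of $v,v'$ lies on the geodesic from $o$ to the other, then $D_v\cap D_{v'}=\emptyset$, and the supports are disjoint.
\item Otherwise, say $v$ is a proper ancestor of $v'$. Then $D_{v'}\subset D_w$ for the unique child $w\in\operatorname{ch}(v)$ on $[v,v']$. Any $f\in\mathcal W_v$ is the constant $c_w$ on $D_w\supset D_{v'}$. For $f'\in\mathcal W_{v'}$ this gives $\ip{f}{f'}=c_w\sum_{D_{v'}}f'=0$.
\end{itemize}

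\emph{Spanning.} Since the summands are mutually orthogonal, it suffices to show that the dimensions add up to $|\delta B_r|=nq^r$. The number of vertices of $B_r$ other than $o$ is $|B_r|-1=n(q^r-1)/(q-1)$. Hence
\[
1+(n-1)+(q-1)\cdot\frac{n(q^r-1)}{q-1}=n+n(q^r-1)=nq^r .
\]
This is exactly $|S_{r+1}|$, as required.

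There is no real obstacle. The only point needing care is the nested case of orthogonality, namely that $D_{v'}$ lies inside a single child block of $v$, where $f$ is constant.
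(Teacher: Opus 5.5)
Your proof is correct and follows the paper's argument. Orthogonality comes from disjoint blocks, or, in the nested case, from a function in $\mathcal W_v$ being constant on $D_{v'}$ while every function in $\mathcal W_{v'}$ has zero sum there; completeness then follows from the count $1+(n-1)+(q-1)\,n(q^r-1)/(q-1)=nq^r$. Your explicit justifications of $\dim\mathcal W_v=|\operatorname{ch}(v)|-1$ and of $D_{v'}$ lying inside a single child block of $v$ fill in details the paper leaves implicit.
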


\begin{proof}
If the descendant blocks of two vertices are disjoint, the corresponding spaces are orthogonal. If $w$ is a proper descendant of $v$, then a function in $\mathcal{W}_v$ is constant on $D_w$, whereas every function in $\mathcal{W}_w$ has zero sum on $D_w$. This proves mutual orthogonality; the same zero-sum property gives orthogonality to the constants.

It remains to count dimensions. Since $|S_j(o)|=nq^{j-1}$ for $j\ge1$,
\begin{align*}
 1+\sum_{v\in B_r}\dim\mathcal{W}_v
 &=1+(n-1)+(q-1)\sum_{j=1}^r nq^{j-1}\\
 &=nq^r
 =|\delta B_r|.
\end{align*}
The orthogonal sum therefore exhausts $L^2(\delta B_r)$.
\end{proof}

\begin{lemma}\label{lem:haar-eigenvalue}
Let $v\in S_j(o)$, $0\le j\le r$, and set $\ell=r+1-j$. Then
\begin{equation}\label{eq:haar-action}
 \Lambda_{B_r}\big|_{\mathcal{W}_v}
 =\frac{q-1}{q^\ell-1}\,\operatorname{Id}_{\mathcal{W}_v}.
\end{equation}
\end{lemma}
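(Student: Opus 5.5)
Let $f\in\mathcal W_v$ with block values $c_w$, $w\in\operatorname{ch}(v)$. I will write down the harmonic extension $u_f$ explicitly and then read off $\Lambda_{B_r} f$ from the normal derivatives. By symmetry, the candidate $u$ vanishes outside the subtree below $v$, is zero at $v$ itself, and for $w\in\operatorname{ch}(v)$ it is a function only of the depth inside the branch through $w$. Concretely, for $x$ in the branch through $w$ at distance $k$ from $v$ ($1\le k\le \ell$, so $x\in D$-level $j+k$, with boundary at $k=\ell$), I set
\[
u(x)=c_w\,a_k,\qquad a_0=0,\quad a_\ell=1.
\]
Outside the subtree of $v$, $u\equiv 0$; this is consistent with $f=0$ outside $D_v$.

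First I check harmonicity.

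\emph{At $v$.} The neighbours of $v$ are its children, where $u=c_wa_1$, and (if $v\neq o$) its parent, where $u=0$. So $\Delta u(v)=-a_1\sum_w c_w=0$ by condition (3).

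\emph{Outside the subtree of $v$.} Every vertex has $u=0$ at itself and all its neighbours, except the parent $p$ of $v$, whose neighbour $v$ also has $u(v)=0$. So harmonicity holds there trivially.

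\emph{Inside a branch, at depth $1\le k\le \ell-1$.} The vertex has one parent and $q$ children, which gives the recursion
\[
(q+1)a_k=a_{k-1}+q\,a_{k+1}.
\]
Rewriting, $a_{k+1}-a_k=q^{-1}(a_k-a_{k-1})$. Hence $a_k=a_1\sum_{i=0}^{k-1}q^{-i}$. The normalization $a_\ell=1$ then gives
\[
a_1=\frac{1}{\sum_{i=0}^{\ell-1}q^{-i}}=\frac{q^{\ell-1}(q-1)}{q^\ell-1}.
\]
By uniqueness of the harmonic extension (Lemma~\ref{lem:dtn-properties}), this $u$ is $u_f$.

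Next I compute the normal derivative. At a boundary vertex $z\in D_w$,
\[
\partial_\nu u(z)=c_w(a_\ell-a_{\ell-1})=c_w\,a_1q^{-(\ell-1)}=c_w\,\frac{q-1}{q^\ell-1}.
\]
At boundary vertices outside $D_v$ the normal derivative is $0$. Thus $\Lambda_{B_r}f=\frac{q-1}{q^\ell-1}f$, which is \eqref{eq:haar-action}.

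The one case needing care is $v=o$, i.e.\ $j=0$ and $\ell=r+1$. There $o$ has $n$ children and no parent, but the computation at $v$ used only the zero-sum condition (3), so it goes through unchanged. The interior recursion involves only non-root vertices, which always have $q$ children.

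The main (mild) obstacle is just bookkeeping of the depth $\ell=r+1-j$ and the geometric sum. I expect no genuine difficulty.
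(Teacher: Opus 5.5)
Your proposal is correct and follows essentially the same route as the paper: an explicit branchwise radial harmonic extension, with harmonicity at $v$ coming from the zero-sum condition. The only difference is that you derive the profile $a_k$ from the recurrence, whereas the paper writes down the closed form $h_t=(1-q^{-t})/(1-q^{-\ell})$ directly; both give the normal derivative $c_w\frac{q-1}{q^\ell-1}$.
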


\begin{proof}
Take $f\in\mathcal W_v$, with value $c_w$ on $D_w$ for $w\in\operatorname{ch}(v)$. Define
\[
 h_t:=\frac{1-q^{-t}}{1-q^{-\ell}},
 \qquad 0\le t\le\ell.
\]
Thus $h_0=0$, $h_\ell=1$, and
\begin{equation}\label{eq:radial-recurrence}
 (q+1)h_t-h_{t-1}-qh_{t+1}=0,
 \qquad 1\le t\le\ell-1.
\end{equation}
Define $u$ to be zero outside $D_v$ and at $v$. On the descendants of $w\in\operatorname{ch}(v)$, put
\[
 u(x):=c_w h_{d(v,x)}.
\]
Equation \eqref{eq:radial-recurrence} gives harmonicity away from $v$. Harmonicity at $v$ follows from $\sum_w c_w=0$. Hence $u$ is the harmonic extension of $f$.

For $z\in D_w$,
\begin{equation*}
 \partial_\nu u(z)
  =c_w\left(h_\ell-h_{\ell-1}\right) 
  =\frac{q-1}{q^\ell-1}\,c_w.
\end{equation*}
The normal derivative vanishes outside $D_v$, which proves \eqref{eq:haar-action}.
\end{proof}

\begin{proof}[Proof of Theorem~\ref{thm:ball-spectrum}]
By Lemmas~\ref{lem:haar} and~\ref{lem:haar-eigenvalue}, the space $\mathcal{W}_v$ with $v\in S_j(o)$ has eigenvalue $\mu_{r+1-j}$. The root contributes multiplicity $n-1$ to $\mu_{r+1}$. For $1\le\ell\le r$, the vertices in $S_{r+1-\ell}(o)$ contribute
\[
 |S_{r+1-\ell}(o)|(q-1)
 =n(q-1)q^{r-\ell}
\]
to $\mu_\ell$. The constant functions give the simple eigenvalue zero.
\end{proof}

\section{Boundary centroids and local flux-energy bounds}
\label{sec:branch-estimates}

The first lemma selects a centre according to the counting measure on the boundary, rather than according to the metric radius.

\begin{lemma}\label{lem:leaf-centroid}
Let $\Omega\subset T_n$ be finite and connected with $|\delta\Omega|\ge 3$. There exists a vertex $o\in\Omega$ such that every component $C$ of $\overline{\Omega}-\{o\}$ satisfies $|C \cap \delta \Omega| \leq |\delta\Omega|/2$.
\end{lemma}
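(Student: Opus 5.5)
This is the standard leaf-centroid argument, run by descent inside $\Omega$. Write $N=|\delta\Omega|$. For $x\in\Omega$, the components of $\overline\Omega-\{x\}$ correspond bijectively to the neighbours $y$ of $x$ in $\overline\Omega$; write $C_{x,y}$ for the component containing $y$. Define
\[
m(x):=\max_{y\sim x}|C_{x,y}\cap\delta\Omega|.
\]
The plan is to choose $o\in\Omega$ minimizing $m$ (possible because $\Omega$ is finite and nonempty) and to show $m(o)\le N/2$.

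Suppose instead that $m(o)>N/2$, attained at a neighbour $y$. Then $y\in\Omega$. Indeed, if $y\in\delta\Omega$, then $y$ is a leaf and $C_{o,y}=\{y\}$, so $|C_{o,y}\cap\delta\Omega|=1$. But $1\le N/2$ because $N\ge3$. This is the only place the hypothesis $|\delta\Omega|\ge3$ is used.

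Now compare $m(y)$ with $m(o)$ by looking at the components of $\overline\Omega-\{y\}$.
\begin{enumerate}
\item The component containing $o$ is the complement of $C_{o,y}$ in $\overline\Omega\setminus\{y\}$. Its boundary count is $N-|C_{o,y}\cap\delta\Omega|<N/2$.
\item Every other component $C_{y,y'}$ with $y'\ne o$ is strictly contained in $C_{o,y}\setminus\{y\}$. Its boundary vertices are therefore boundary vertices of $C_{o,y}$.
\end{enumerate}

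For the second type, strictness is needed. Since $y\in\Omega$ has degree $n\ge3$, it has at least two neighbours $y',y''\neq o$. Each of the corresponding branches contains at least one leaf, because every finite branch of a tree away from the root ends in a leaf, and all leaves of $\overline\Omega$ lie in $\delta\Omega$. Hence each such component carries strictly fewer than $|C_{o,y}\cap\delta\Omega|$ boundary vertices.

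Combining both types gives $m(y)<m(o)$, which contradicts the minimality of $m(o)$. Therefore every component $C$ of $\overline\Omega-\{o\}$ satisfies $|C\cap\delta\Omega|\le N/2$.

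The only delicate point is the strict decrease in the second case. It relies on $y$ having at least two further branches, each containing a leaf. That holds since interior vertices of $\overline\Omega$ have degree $n\ge3$ and the leaves of $\overline\Omega$ are exactly $\delta\Omega$. Everything else is routine.
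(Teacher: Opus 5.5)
Your proof is correct, but it takes a different route from the paper.

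The paper does not minimize the largest branch count directly. It chooses $o$ minimizing the leaf-median potential $F(x)=\sum_{\omega\in\delta\Omega}d(x,\omega)$ over all of $\overline\Omega$. Both steps then come from the single identity $F(o')-F(o)=N-2|C\cap\delta\Omega|$, where $o'$ is the neighbour of $o$ in a component $C$:
- for $o\in\delta\Omega$ this gives $2-N<0$, so the minimizer lies in $\Omega$;
- a component carrying more than $N/2$ boundary vertices gives a strict decrease.

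The balancing step there uses no information about interior degrees. It works for any finite tree with arbitrary nonnegative vertex weights.

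You instead minimize exactly the quantity the lemma controls, and you restrict to $\Omega$ from the start. So the paper's case $o\in\delta\Omega$ is replaced by your observation that a heavy neighbour cannot be a leaf. The price is the strict-decrease step, which genuinely needs $\deg y=n\ge3$ and a leaf in every branch at $y$. With only one further branch at $y$, its count would equal $|C_{o,y}\cap\delta\Omega|$ and $m$ would not drop. You identified and handled this correctly, so both proofs are complete.

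The paper's potential-function version is slightly more robust. Yours is the classical Jordan centroid descent adapted to leaf counts.
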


\begin{proof}
The induced subgraph on $\overline{\Omega}$ is a finite tree whose leaves are precisely the boundary vertices $\delta\Omega$. For $x\in\overline{\Omega}$, set
\[
 F(x):=\sum_{\omega\in\delta\Omega} d(x,\omega),
\]
where $d$ is the graph distance in $T_n$. Choose $o$ minimizing $F$ over $\overline{\Omega}$.

We first show that $o\in \Omega$. Otherwise, $o\in\delta\Omega$, and let $o'$ be the unique neighbour of $o$ in $\overline{\Omega}$. For every $\omega\in\delta\Omega\setminus\{o\}$, the path from $o$ to $\omega$ passes through $o'$, so
\[
d(o',\omega)=d(o,\omega)-1,
\]
while for $\omega=o$, $d(o',o)=1=d(o,o)+1$. Thus,
\[
\begin{aligned}
F(o')-F(o)
&=\sum_{\omega\in\delta\Omega}\left(d(o',\omega)-d(o,\omega)\right)\\
&=(1-0)+\sum_{\omega\in\delta\Omega\setminus\{o\}}(-1)\\
&=2-|\delta\Omega|<0,
\end{aligned}
\]
where the last inequality follows from $|\delta\Omega|\ge3$. This contradicts the minimality of $o$. Hence, $o\in\Omega$.

Now suppose there is a component $C$ of $\overline{\Omega}-\{o\}$ such that 
\[|C\cap\delta\Omega| > |\delta\Omega|/2.\]
Let $o'\in C$ be the unique neighbour of $o$. For $\omega\in\delta\Omega$, the distance to $o'$ is $d(o,\omega)-1$ if $\omega\in C$, and $d(o,\omega)+1$ if $\omega\notin C$.
Hence
\[
\begin{aligned}
F(o')-F(o)
&=\sum_{\omega\in\delta\Omega}\left(d(o',\omega)-d(o,\omega)\right)\\
&=\sum_{\omega\in C\cap\delta\Omega}(-1) +\sum_{\omega\in\delta\Omega\setminus C}(+1)\\
&=-|C\cap\delta\Omega| +\left(|\delta\Omega|-|C\cap\delta\Omega|\right)\\
&=|\delta\Omega|-2|C\cap\delta\Omega|<0,
\end{aligned}
\]
contradicting the minimality of $o$.

Therefore every component of $\overline{\Omega}-\{o\}$ contains at most $|\delta\Omega|/2$ boundary vertices.
\end{proof}

Let $\Omega\subset T_n$ be finite and connected, and let $o\in\Omega$. For each neighbour $v$ of $o$, the corresponding branch $\mathcal{T}=\mathcal{T}(o,v)$ is the subtree of $\overline\Omega$ obtained by deleting the edge $\{o,v\}$, taking the component containing $v$, and adjoining $o$. We root $\mathcal{T}$ at $o$. Then $o$ is a boundary vertex of $\mathcal{T}$, and every interior vertex of $\mathcal{T}$ other than $o$ has exactly $q=n-1$ children.

Orient the edges of $\mathcal{T}$ away from $o$. For each $e\in E(\mathcal{T})$, let $\mathcal{T}_e$ be the component of $\mathcal{T}\setminus e$ not containing $o$, and set
\[
k_e:=|\delta\Omega\cap\mathcal{T}_e|.
\]
Define
\begin{equation}\label{eq:branch-moment-def}
W(\mathcal{T}):=\sum_{e\in E(\mathcal{T})} k_e^2.
\end{equation}

\begin{lemma}\label{lem:branch-moment}
Let $\mathcal{T}=\mathcal{T}(o,v)$ be a branch of $\overline\Omega$ as above, and $k:=|\delta\Omega\cap \mathcal{T}|$. Then
\[
W(\mathcal{T})\ge \frac{k(qk-1)}{q-1}.
\]
Equality holds if and only if, at every interior vertex of $\mathcal{T}$ other than $o$, the $q$ descendant branches contain the same number of boundary vertices. This condition forces $k=q^s$ for some $s\ge0$ and identifies $\mathcal{T}$ as the complete $q$-ary branch of depth $s+1$.
\end{lemma}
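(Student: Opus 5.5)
Write $g(k):=k(qk-1)/(q-1)$. The plan is strong induction on the number of vertices of the branch, with a single Jensen-type convexity step at each vertex.

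\textbf{Recursive structure.} The branch $\mathcal T$ has the edge $e_0=(o,v)$ with $k_{e_0}=k$.

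If $v\in\delta\Omega$, then $k=1$ and $W(\mathcal T)=1=g(1)$. In this case equality holds, the condition is vacuous, $s=0$, and $\mathcal T$ is the single edge, that is, the complete $q$-ary branch of depth $1$.

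If $v\in\Omega$, then $v$ has exactly $q$ children $w_1,\dots,w_q$. Each edge $\{v,w_i\}$ starts a sub-branch $\mathcal T_i=\mathcal T(v,w_i)$ of the same type, now rooted at $v$. Writing $k_i=|\delta\Omega\cap\mathcal T_i|$, we have $\sum_i k_i=k$ and each $k_i\ge1$. Every edge of $\mathcal T$ other than $e_0$ lies in exactly one $\mathcal T_i$, and its $k_e$ computed in $\mathcal T$ agrees with the one computed in $\mathcal T_i$. Hence
\[
W(\mathcal T)=k^2+\sum_{i=1}^q W(\mathcal T_i).
\]

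\textbf{Inductive step.} The induction hypothesis applied to each $\mathcal T_i$ gives
\[
W(\mathcal T)\ge k^2+\sum_i g(k_i).
\]
Since $g$ is a convex quadratic,
\[
\sum_i g(k_i)=\frac{q\sum k_i^2-k}{q-1}\ge\frac{k^2-k}{q-1},
\]
using $\sum_i k_i^2\ge k^2/q$ (Cauchy--Schwarz), with equality iff all $k_i=k/q$. Then
\[
k^2+\frac{k^2-k}{q-1}=\frac{qk^2-k}{q-1}=g(k),
\]
which closes the induction.

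\textbf{Equality case.} Equality in $\mathcal T$ holds iff
\begin{enumerate}
\item equality holds in each $\mathcal T_i$, and
\item the $k_i$ are all equal.
\end{enumerate}
Unwinding the induction, this is exactly the condition that the descendant boundary counts are balanced at every interior vertex other than $o$.

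For the structural consequence, induct again. In the base case, $k=1=q^0$. Otherwise each $k_i=k/q$ and, by induction, each $k_i=q^{s'}$ with $\mathcal T_i$ complete of depth $s'+1$. Since all $k_i$ are equal, all $s'$ coincide. Therefore $k=q^{s'+1}$, and $\mathcal T$ is complete $q$-ary of depth $s'+2$.

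Conversely, for a complete branch the balance condition is clear, so equality holds.

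\textbf{Main obstacle.} The only subtle points are bookkeeping. First, one must verify that the edge weights $k_e$ are unchanged when an edge is viewed inside a sub-branch rooted at $v$ instead of at $o$. Second, equality must propagate through all levels, which requires both inequalities (the inductive hypothesis and Cauchy--Schwarz) to be tight simultaneously. Neither step should cause real difficulty.
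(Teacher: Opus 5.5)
Your proof is correct and follows the paper's argument: the same induction using the decomposition $W(\mathcal{T})=k^2+\sum_i W(\mathcal{T}_i)$ and Cauchy--Schwarz in the form $q\sum_i k_i^2\ge k^2$, with equality propagating level by level. The only difference is in identifying the equality structure ($k=q^s$, complete branch of depth $s+1$): you use a second induction over the sub-branches, while the paper follows a geodesic from $o$ to a nearest boundary vertex to get $k=q^s$ and then reads off the shape from the balancing condition; your version is slightly tidier.
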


\begin{proof}
We argue by induction on the number of interior vertices of $\mathcal{T}$ other than its root.

If $v\in \delta\Omega$, then $k=1$ and $\mathcal{T}$ consists of the single edge $\{o,v\}$. Hence
\[
W(\mathcal{T})=1=\frac{1(q\cdot 1-1)}{q-1}=\frac{k(qk-1)}{q-1},
\]
and the equality condition is vacuous.

Now suppose that $v\in\Omega$. Let $v_1,\dots,v_q$ be its children, and let $\mathcal{T}_j$ be the branch rooted at $v$ with initial edge $\{v,v_j\}$. Set $k_j=|\delta\Omega\cap \mathcal{T}_j|$. Denote by $W_j=W(\mathcal{T}_j)$ its flux energy. Then
\[
k=\sum_{j=1}^q k_j,
\qquad
W(\mathcal{T})=k^2+\sum_{j=1}^q W_j,
\]
where the term $k^2$ is the contribution of the initial edge $\{o,v\}$, which separates all $k$ boundary vertices from the root $o$.

Each $\mathcal{T}_j$ has fewer interior vertices than $\mathcal{T}$, so the induction hypothesis gives
\[
W_j\ge \frac{k_j(qk_j-1)}{q-1}.
\]
Consequently,
\begin{align*}
W(\mathcal{T})-\frac{k(qk-1)}{q-1}
&\ge
k^2+\sum_{j=1}^q \frac{k_j(qk_j-1)}{q-1}
-\frac{k(qk-1)}{q-1}\\
&=\frac{q\sum_{j=1}^q k_j^2-k^2}{q-1} \ge 0,
\end{align*}
where the last inequality follows from Cauchy-Schwarz.

Equality holds preccisely when both the induction inequalities and the Cauchy-Schwarz step are equalities. This means that at every interior vertex of $\mathcal{T}$ other than $o$, the $q$ descendant branches have the same number of boundary vertices. We now identify the trees satisfying this recursive balancing. Let $z\in \mathcal{T} \cap \delta\Omega$ have minimal distance from $o$, say $s+1$. Starting from the unique child of $o$, balancing forces each descendant branch along the geodesic to $z$ to contain $k/q^j$ boundary vertices at
depth $j+1$. At $z$ this number is $1$, so $k=q^s$. Conversely, if $k=q^s$, the same balancing condition forces every vertex within distance $s$ of $o$ (except $o$) to have $q$ children, and all vertices at distance $s+1$ to be boundary vertices. Hence $\mathcal{T}$ is the complete $q$-ary branch of depth $s+1$. The converse is immediate.
\end{proof}

We shall also use the strict form of this inequality in the rigidity argument.

\begin{lemma}\label{lem:centroid-quadratic}
Let $p\ge3$, $m>0$, and let $k_1,\ldots,k_p>0$ satisfy
\begin{equation}\label{eq:centroid-hypotheses}
 \sum_{i=1}^p k_i=pm,
 \qquad
 \max_i k_i\le\frac{pm}{2}.
\end{equation}
Then
\begin{equation}\label{eq:centroid-quadratic}
 \max_{\substack{\mathbf{a}=(a_1,\dots,a_p)\in\mathbb{R}^p\setminus\{\mathbf{0}\}\\
                  \sum_{i=1}^p k_i a_i=0}}
 \frac{\sum_{i=1}^p k_i^2a_i^2}{\sum_{i=1}^p k_i a_i^2} \ge m.
\end{equation}
Equality holds if and only if $k_1=\cdots=k_p=m$.
\end{lemma}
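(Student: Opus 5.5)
The plan is to argue directly with an explicit two-block test vector rather than computing the maximum in \eqref{eq:centroid-quadratic} exactly. The idea is to single out a branch with the largest weight and balance it against all the others. Let $j$ be an index with $K:=k_j=\max_i k_i$, and set $R:=pm-K=\sum_{i\ne j}k_i>0$. We have $K\ge m$ because the maximum is at least the mean, and $R\ge pm/2>0$ by the hypothesis $K\le pm/2$.

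\textbf{Test vector and its quotient.} Put $a_j=R$ and $a_i=-K$ for $i\neq j$. Then $\sum_i k_ia_i=KR-KR=0$, so $\mathbf a$ is admissible and nonzero. Writing $\Sigma':=\sum_{i\ne j}k_i^2$, the numerator is $K^2R^2+K^2\Sigma'$. The denominator is $KR^2+K^2R=KR(R+K)=KR\,pm$. Hence the quotient equals
\[
\frac{K\left(R^2+\Sigma'\right)}{R\,pm}.
\]

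\textbf{Lower bound via Cauchy--Schwarz.} By Cauchy--Schwarz over the $p-1$ indices $i\ne j$, we have $\Sigma'\ge R^2/(p-1)$. Therefore the quotient is at least
\[
\frac{K R^2\bigl(1+\tfrac1{p-1}\bigr)}{R\,pm}=\frac{KR}{(p-1)m}.
\]
It remains to show that $K(pm-K)\ge(p-1)m^2$. The function $K\mapsto K(pm-K)$ is concave, with its vertex at $K=pm/2$, so it is nondecreasing on $[m,pm/2]$. Its minimum over the admissible range is therefore attained at $K=m$, where it equals $(p-1)m^2$. This proves \eqref{eq:centroid-quadratic}. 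The only delicate point is that the hypothesis $\max_i k_i\le pm/2$ is exactly what keeps $K$ on the increasing side of this parabola. Without it a single huge branch could make $KR$ small, and the plain trace argument fails for the same reason, since it goes the wrong way by Cauchy--Schwarz.

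\textbf{Equality case.} If all $k_i=m$, then every admissible quotient is identically $m^2\sum a_i^2/(m\sum a_i^2)=m$, so the maximum equals $m$. Conversely, suppose the $k_i$ are not all equal. Then $K>m$, and since the parabola is strictly increasing on $[m,pm/2)$ and $pm/2>m$ (because $p\ge3$), we get $KR>(p-1)m^2$ strictly. The test vector then already gives a quotient strictly larger than $m$, so the maximum exceeds $m$. Hence equality in \eqref{eq:centroid-quadratic} holds if and only if $k_1=\cdots=k_p=m$.
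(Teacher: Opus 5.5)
Your proof is correct and is essentially the paper's argument: the paper uses exactly your one-versus-rest test vector, the Cauchy--Schwarz bound $\sum_{i\ne j}k_i^2\ge R^2/(p-1)$, and the monotonicity of $K(pm-K)$ on $[m,pm/2]$. It does so, however, only in the case where a single entry exceeds $m$, and it treats the case of two entries $\ge m$ separately with the two-point vector $a_i=k_j$, $a_j=-k_i$, whose quotient is $2k_ik_j/(k_i+k_j)>m$. By taking $j$ to be an index of the maximum, you observe that the one-versus-rest estimate needs only $m\le K\le pm/2$, so the paper's case split is unnecessary and your version is slightly more economical.
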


\begin{proof}
If no $k_i$ exceeds $m$, the first condition in \eqref{eq:centroid-hypotheses} gives $k_i=m$ for every $i$, and the quotient in \eqref{eq:centroid-quadratic} is identically $m$ on the
constraint space.

Suppose that at least two of the $k_i$ are at least $m$. Unless all entries equal $m$, choose $i\neq j$ with $k_i>m$ and $k_j\ge m$. The vector
\[
 a_i=k_j,
 \qquad
 a_j=-k_i,
 \qquad
 a_\ell=0\quad(\ell\neq i,j)
\]
satisfies the constraint and has quotient
\[
\frac{2k_ik_j}{k_i+k_j}>m,
\quad\text{since } k_i>m \text{ and } k_j\ge m.
\]

It remains to consider the case in which $k_1>m$ is the unique entry larger than $m$. Put $K=\sum_{i=2}^p k_i=pm-k_1$ and take
\[
 a_1=1,
 \qquad
 a_i=-\frac{k_1}{K}\quad(2\le i\le p).
\]
By Cauchy's inequality, $\sum_{i=2}^p k_i^2\ge K^2/(p-1)$, and the corresponding quotient satisfies
\begin{align*}
 \frac{k_1\left(K^2+\sum_{i=2}^p k_i^2\right)}{Kpm}
 &\ge\frac{p k_1K}{(p-1)pm}.
\end{align*}
Write $t=k_1/(pm)$. The centroid bound gives $1/p<t\le1/2$. Since $t(1-t)$ is strictly increasing on $[1/p,1/2]$,
\[
 p^2t(1-t)>p-1.
\]
Therefore, the quotient is strictly larger than $m$.
\end{proof}

\section{Proof of the sharp inequality}
\label{sec:main-proof}

In this section, we prove Theorem~\ref{thm:main}. We begin with the following sharp estimate.

\begin{theorem}\label{thm:ball-volume}
Let $n\ge3$, $r\ge0$, and let $\Omega$ be a finite connected subset of
$T_n$. If
\[
 |\delta\Omega|=|S_{r+1}|=n(n-1)^r,
\]
then
\begin{equation}\label{eq:main-inequality}
 \sigma_1(\Omega)
 \le
 \frac{n-2}{(n-1)^{r+1}-1}
 =\sigma_1(B_r).
\end{equation}
Equality holds if and only if $\Omega$ is a ball of radius $r$.
\end{theorem}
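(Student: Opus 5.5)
The plan is to bound $1/\sigma_1(\Omega)$ from below using the inverse form of Proposition~\ref{prop:inverse-form}, with a test flux adapted to a boundary centroid. Write $N=|\delta\Omega|=nq^r$ with $q=n-1$. The case $r=0$ gives $|\Omega|=1$ by Lemma~\ref{lem:boundary-volume}, and $\Omega$ is then a ball, so we may assume $N\ge3$ throughout (indeed $N\ge n\ge3$ always).

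First, Lemma~\ref{lem:leaf-centroid} gives $o\in\Omega$ such that each component of $\overline\Omega-\{o\}$ carries at most $N/2$ boundary vertices. The vertex $o$ has $p=n\ge3$ neighbours $v_1,\dots,v_n$. Let $\mathcal T_i=\mathcal T(o,v_i)$ be the corresponding branches, and let $k_i=|\delta\Omega\cap\mathcal T_i|>0$, so that $\sum k_i=N=nm$ with $m=q^r$ and $\max k_i\le nm/2$.

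For $\mathbf a\in\R^n$ with $\sum k_ia_i=0$, define $g$ to be equal to $a_i$ on $\delta\Omega\cap\mathcal T_i$. Then $g\in\mathcal H_0(\delta\Omega)$ and $\ip{g}{g}=\sum k_ia_i^2$. Root the cut form at $o$. Every edge $e$ lies in a unique branch $\mathcal T_i$, and $U_e\subset\mathcal T_i$, so its contribution is $k_e^2a_i^2$. Hence
\[
Q_\Omega(g)=\sum_i a_i^2W(\mathcal T_i)\ge \sum_i a_i^2\frac{k_i(qk_i-1)}{q-1}
\]
by Lemma~\ref{lem:branch-moment}. This bound equals $\frac{q}{q-1}\sum k_i^2a_i^2-\frac1{q-1}\sum k_ia_i^2$. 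Dividing by $\sum k_ia_i^2$, maximizing over $\mathbf a$, and applying Lemma~\ref{lem:centroid-quadratic}, we get
\[
\frac1{\sigma_1(\Omega)}\ge \frac{qm-1}{q-1}=\frac{q^{r+1}-1}{q-1}=\frac1{\sigma_1(B_r)}.
\]
This is \eqref{eq:main-inequality}.

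For equality, both inequalities must be tight at a maximizing $\mathbf a$. I expect the care to lie here: the maximizer in Lemma~\ref{lem:centroid-quadratic} need not give equality in the branch bounds for branches where $a_i=0$. The way around this is to avoid needing a single $\mathbf a$ for the branch equalities. If the $k_i$ are not all equal, Lemma~\ref{lem:centroid-quadratic} gives a strict inequality for the quadratic quotient. This already forces strict inequality in \eqref{eq:main-inequality}, because the branch bound is used only as a lower bound. So equality forces $k_i=m=q^r$ for all $i$.

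Then every $\mathbf a$ with $\sum a_i=0$ is admissible and the quotient is constant. Equality requires some such $\mathbf a$ with
\[
\sum_i a_i^2\Bigl(W(\mathcal T_i)-\frac{k_i(qk_i-1)}{q-1}\Bigr)=0.
\]
Since each bracket is nonnegative, this is not immediate for all $i$. However, the quantity $\max_{\mathbf a}$ of the full quotient is at least the quotient at any admissible $\mathbf a$. If some branch $i_0$ had a strict excess, I take $\mathbf a$ with $a_{i_0}=1$, $a_j=-1$ for one $j\ne i_0$, and zeros elsewhere. This gives a strictly larger value, which contradicts equality.

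Hence every branch attains equality in Lemma~\ref{lem:branch-moment}. With $k_i=q^r$, each branch is the complete $q$-ary branch of depth $r+1$, so $\overline\Omega=B_{r+1}(o)$ and $\Omega=B_r(o)$. Conversely, Theorem~\ref{thm:ball-spectrum} gives equality for balls.
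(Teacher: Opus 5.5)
Your proposal is correct and follows the paper's proof essentially step for step. You take a boundary centroid, use branch-constant test fluxes in the cut form, and combine Lemma~\ref{lem:branch-moment} with Lemma~\ref{lem:centroid-quadratic} for the bound; for rigidity you first force $k_i=q^r$ via the strict case of Lemma~\ref{lem:centroid-quadratic}, then use the two-branch vector $a_{i_0}=1$, $a_j=-1$ to force equality in every branch bound, exactly as the paper does. One minor wording point: equality in \eqref{eq:main-inequality} forces the branch identity for \emph{every} admissible $\mathbf a$, not just for some, and this is precisely what your contradiction argument uses.
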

\begin{proof} 
Set $q=n-1$ and
\[
|\delta\Omega|=nq^r.
\]
Apply Lemma \ref{lem:leaf-centroid} and let $o$ be the resulting centroid. Since $|\delta\Omega|\ge3$, the vertex $o$ lies in $\Omega$ and has degree $n$. The components of $\overline{\Omega}-\{o\}$, together with their attaching edges at $o$, form branches
\[
\mathcal{T}_1,\ldots,\mathcal{T}_n.
\]
Let $k_i=|\mathcal{T}_i\cap \delta\Omega|$. Then
\begin{equation}\label{eq:central-leaf-counts}
\sum_{i=1}^n k_i=nq^r,
\qquad
k_i\le\frac{nq^r}{2}.
\end{equation}
For each $i$, set $W_i:=W(\mathcal{T}_i)$, where $W(\cdot)$ is defined in \eqref{eq:branch-moment-def}. Given $\mathbf{a}=(a_1,\dots,a_n)\in \mathbb{R}^n$ with
$\sum\limits_i k_i a_i=0$, define a boundary flux by
\[
g(z):=a_i,
\qquad
z\in\delta\Omega\cap \mathcal{T}_i.
\]
Then
\begin{equation}\label{eq:branch-flux-form}
\ip{g}{g}_{\delta\Omega}=\sum_{i=1}^n k_i a_i^2,
\qquad
Q_\Omega(g)=\sum_{i=1}^n W_i a_i^2.
\end{equation}
By Proposition \ref{prop:inverse-form} and Lemma \ref{lem:branch-moment},
\begin{align}
\frac1{\sigma_1(\Omega)}
&\ge
\max_{\substack{\mathbf{a}\neq0\\ \sum_{i=1}^n k_i a_i=0}}
\frac{\sum_{i=1}^n W_i a_i^2}{\sum_{i=1}^n k_i a_i^2}
\notag\\
&\ge
\frac{q}{q-1}
\max_{\substack{\mathbf{a}\neq0\\ \sum_{i=1}^n k_i a_i=0}}
\frac{\sum_{i=1}^n k_i^2a_i^2}{\sum_{i=1}^n k_i a_i^2} -\frac1{q-1}. \label{eq:sharp-chain}
\end{align}
Apply Lemma~\ref{lem:centroid-quadratic} to \eqref{eq:central-leaf-counts}, with $p=n$ and $m=q^r$. This yields
\[
\frac1{\sigma_1(\Omega)}\ge
\frac{q^{r+1}-1}{q-1}.
\]
Taking reciprocals and using Theorem~\ref{thm:ball-spectrum}, we
obtain
\[
\sigma_1(\Omega)\le \frac{q-1}{q^{r+1}-1} = \sigma_1(B_r).
\]

It remains to determine the equality case. If equality holds in \eqref{eq:main-inequality}, then every inequality in the chain \eqref{eq:sharp-chain} must be an equality. Applying Lemma \ref{lem:centroid-quadratic} gives
\begin{equation}\label{eq:equal-central-branches}
k_1=\cdots=k_n=q^r.
\end{equation}
Substituting $k_i=q^r$ into Lemma \ref{lem:branch-moment} gives, for each $i$,
\begin{equation}\label{eq:branch-normalized}
\frac{W_i}{k_i}\ge\frac{q^{r+1}-1}{q-1}.
\end{equation}
We claim that equality holds for every $i$. If not, choose $i$ with
strict inequality, and choose $j\neq i$. Define a coefficient vector by
\[
a_i=1,\qquad a_j=-1,\qquad a_\ell=0\quad(\ell\neq i,j),
\]
and let $g$ be the associated boundary flux as in \eqref{eq:branch-flux-form}. Then $\sum_{\ell=1}^n k_\ell a_\ell=0$. Since $k_i=k_j=q^r$,  by \eqref{eq:branch-flux-form}, we have
\[
\frac{Q_\Omega(g)}{\|g\|^2_{\delta\Omega}}
= \frac{W_i+W_j}{k_i+k_j}
= \frac{W_i+W_j}{2q^r}
> \frac{q^{r+1}-1}{q-1}=
\frac1{\sigma_1(B_r)}. 
\]
This contradicts equality in \eqref{eq:sharp-chain}. Thus equality in \eqref{eq:branch-normalized} holds for every $i$, so every branch $\mathcal{T}_i$ attains equality in Lemma \ref{lem:branch-moment}.

Each central branch has $q^r$ boundary vertices. By the equality characterization in Lemma \ref{lem:branch-moment}, it must be the complete $q$-ary branch of depth $r+1$. The $n$ branches together therefore form $\overline{B_r(o)}$, so $\Omega=B_r(o)$. The converse follows from Theorem \ref{thm:ball-spectrum}.
\end{proof}

\begin{remark}
The centre used in the proof is determined by the distribution of the boundary vertices, not prescribed in advance. The proof involves no surgery or rearrangement of branches. In the equality case, the argument itself identifies this centroid with the centre of the regular ball.
\end{remark}

We now complete the proof of the main theorem.

\begin{proof}[Proof of Theorem \ref{thm:main}]
Let $o$ be the boundary centroid of $\Omega$ provided by
Lemma~\ref{lem:leaf-centroid}. 
The components of $\overline\Omega-\{o\}$, together with their attaching edges at $o$, form branches
\[
\mathcal{T}_1,\ldots,\mathcal{T}_n.
\]
Let $k_i=|\mathcal{T}_i\cap \delta\Omega|$. Then
\[
\sum_{i=1}^n k_i=|\delta\Omega|.
\] 
By Lemma~\ref{lem:leaf-centroid},
\[
k_i\le\frac{|\delta\Omega|}{2}
\quad\text{for every }i.
\]
Set 
\[
m:=\frac1n\sum_{i=1}^n k_i 
\]
and
\[
M:=\max_{\substack{\mathbf{a}\neq0\\ \sum_i k_i a_i=0}}
\frac{\sum_i k_i^2a_i^2}{\sum_i k_i a_i^2}.
\]
By Lemma~\ref{lem:centroid-quadratic}, we have $M\ge m=\frac{|\delta\Omega|}{n}$.

For any admissible coefficient vector $\mathbf{a}=(a_1,\dots,a_n)$, define a boundary flux $g$ by
\[
g(z):=a_i,\qquad z\in\delta\Omega\cap \mathcal{T}_i.
\]
Then
\[
\ip{g}{g}_{\delta\Omega}=\sum_{i=1}^n k_i a_i^2,
\]
and, by Lemma~\ref{lem:branch-moment},
\[
Q_\Omega(g)=\sum_{i=1}^n W(\mathcal{T}_i)a_i^2
\ge \frac{q}{q-1}\sum_{i=1}^nk_i^2a_i^2 -\frac1{q-1}\sum_{i=1}^n k_i a_i^2.
\]
Applying Proposition~\ref{prop:inverse-form}, we obtain 
\[
\frac1{\sigma_1(\Omega)}
\ge\frac{Q_\Omega(g)}{\ip{g}{g}_{\delta\Omega}}
\ge \frac{q}{q-1} \frac{\sum_i k_i^2a_i^2}{\sum_i k_i a_i^2} - \frac1{q-1}.
\]
Taking the supremum over all admissible $\mathbf{a}$ and using $M\ge \frac{|\delta\Omega|}{n}$, we obtain
\[
\frac1{\sigma_1(\Omega)}
\ge \frac{q}{q-1}M-\frac1{q-1}
> \frac{q|\delta\Omega|}{n(q-1)}-\frac1{q-1}
= \frac{(n-1)|\Omega|+1}{n},
\]
where the last equality follows from Lemma \ref{lem:boundary-volume}. Hence
\[
\sigma_1(\Omega)\leq \frac{n}{(n-1)|\Omega|+1}.
\]

We now determine the equality case. Suppose the equality holds. Then $M = \frac{|\delta \Omega|}{n}$, and for every $i$, 
\begin{equation*} 
W(\mathcal{T}_i) = \frac{k_i(qk_i-1)}{q-1}. 
\end{equation*} 
Applying Lemma \ref{lem:centroid-quadratic}, we have
\begin{equation}\label{eq:equal-general-branches} 
k_1=\cdots=k_n=\frac{|\delta \Omega|}{n}=:k. 
\end{equation} 
Moreover, Lemma \ref{lem:branch-moment} implies that
\[ 
k=q^s 
\] 
for some $s\in\mathbb N_0$, and that each $\mathcal{T}_i$ is the complete $q$-ary branch of depth $s+1$. Since all $n$ central branches have this form, their union is $\overline{B_s(o)}$. Consequently, 
\[ 
\Omega=B_s(o). 
\] 
Conversely, if $\Omega=B_s(o)$, then Theorem~\ref{thm:ball-spectrum} gives 
\[ 
\sigma_1(B_s) =\frac{q-1}{q^{s+1}-1}. 
\] 
Since 
\[ 
|B_s| =1+n\frac{q^s-1}{q-1}, 
\] 
a direct computation gives 
\[ 
\frac{n}{q|B_s|+1} =\frac{q-1}{q^{s+1}-1} =\sigma_1(B_s). 
\] 
Thus every ball attains equality.
\end{proof}

At the end of this section, we prove Corollary \ref{cor:weinstock}.

\begin{proof}[Proof of Corollary~\ref{cor:weinstock}]
Let $\Omega\subset T_n$ be finite and connected, and suppose that
\[
|B_r|\le|\Omega|<|B_{r+1}|.
\]

If $|\Omega|=|B_r|$, by Lemma \ref{lem:boundary-volume}, 
\[
|\delta\Omega|=(n-2)|\Omega|+2=(n-2)|B_r|+2=|\delta B_r|=n(n-1)^r.
\]
Theorem~\ref{thm:ball-volume} then gives
\[
\sigma_1(\Omega)\le\sigma_1(B_r),
\]
with equality if and only if $\Omega=B_r$. 
If $|B_r|<|\Omega|<|B_{r+1}|$, then Theorem \ref{thm:main} gives that $\sigma_1(\Omega) < \sigma_1(B_r)$.

Combining both cases, we obtain the asserted inequality, with equality if
and only if $\Omega=B_r$.
\end{proof}

\section*{Acknowledgments}
The authors thank Professor Bobo Hua for his guidance and constant support. L. Wang is partially supported by NSFC (Nos.~12101125 and 12371052) and the Fujian Alliance of Mathematics (No.~2024SXLMMS01). T. Wang is partially supported by the National Key R\&D Program of China (No.~2025YFA1017500).

\section*{Declarations}

\noindent\textbf{Conflict of Interest}\ \ The authors declare that they have no conflict of interest.

\vspace{1em}
\noindent\textbf{Data Availability}\ \ Data sharing is not applicable to this article as no datasets were generated or analyzed during the current study.

\bibliographystyle{plain}
\bibliography{WeinstockInequality}

\end{document}